\newtheorem{theorem}{Theorem}
\title{A Study of Qualitative Correlations Between Crucial Bio-markers and the Optimal Drug Regimen of Type-I Lepra Reaction: A Deterministic Approach }
\author[1]{Dinesh Nayak} 
\author[2]{A.V. Sangeetha}
\author[2]{D. K. K. Vamsi}
\affil[1, 3]{ \ Department of Mathematics and Computer Science, Sri Sathya Sai Institute of Higher Learning, Andhra Paresh, India - 515134}
\affil[2]{ \ Central Leprosy Teaching and Research Institute,Chengalpattu, Tamil Nadu, India - 603003 }
\affil[1]{First Author. Email: dineshnayak@sssihl.edu.in}
\affil[3]{Corresponding author. Email: dkkvamsi@sssihl.edu.in}
\begin{document}
\maketitle
\begin{abstract}
\textit{Mycobacterium leprae} is a bacteria that causes the disease \textit{Leprosy} (Hansen's disease), which is a neglected tropical disease. More than 200000 cases are  being reported per year world wide. This disease leads to a chronic stage  known as \textit{Lepra reaction} that majorly causes nerve damage of peripheral  nervous system leading to loss of organs.  The early detection of this \textit{Lepra reaction} through the level of bio-markers can prevent this reaction occurring and  the further disabilities. Motivated by this, we frame a mathematical model considering the pathogenesis of leprosy and the chemical pathways involved in  \textit{Lepra reactions}. The model incorporates the dynamics of the susceptible schwann cells, infected schwann cells and the bacterial load and the concentration levels of the bio markers $IFN-\gamma$, $TNF-\alpha$, $IL-10$, $IL-12$, $IL-15$ and $IL-17$. We consider a nine compartment optimal control problem considering the drugs used in Multi Drug Therapy (MDT) as controls. We validate the model using 2D - heat plots.  We study the correlation between the bio-markers levels and  drugs in MDT and propose a optimal drug regimen through these optimal control studies. We use  the \textit{Newton's Gradient 
Method } for the optimal control studies.  
\end{abstract}

\section{Introduction}
\hspace{0.2 in} 
\textit{Leprosy} the oldest disease known to human civilization, is one of the highly neglected tropical disease caused by  a slow growing bacteria called \textit{ Mycobacterium leprae} (M. leprae). Mainly this bacteria causes damage to the \textit{schwann cells} and hence the skin thereby the peripheral nervous system of host body gets impacted. It also has bad impact on eyes and mucosa of upper respiratory tract. The report of \textit{World Health Organisation} 
 (WHO) states that about 120 countries are still reporting new cases of Leprosy which  accounts to more than 200000 per year \cite{web2}.  In the year 2021 India alone spotted about 75, 395 new cases \cite{web3}. Leprosy disease can  lead to a chronic phase of Lepra reaction that causes permanent disabilities and loss of organs.  Early detection of the disease by observing the key changes in the bio-markers level will play a vital role to prevent the  losses. \\

\hspace{0.2 in} The chemical and metabolic properties of  the cytosol environment of   host cell that gets altered in the presence of the M.Leprae was first explained by \textit{Rudolf Virchow } (1821–1902) in the late nineteenth century \cite{virchow1865krankhaften}. Further different clinical studies explained about the path way of cytokine responses based upon which there are mainly two types of \textit{Lepra reactions}. The Type-1 \textit{Lepra reactions} are associated with cellular immune response where as Type 2 reactions are associated with humoral immune response  \cite{luo2021host, bilik2019leprosy}. Both these path ways involve the crucial  bio-markers/cytokines  such as $IFN-\gamma$, $TNF-\alpha$, $IL-10$, $IL-12$, $IL-15$ and $IL-17$ \cite{oliveira2005cytokines}.\\

\hspace{0.2 in} There are quite a number of biochemical studies that deal with the pathogenesis of \textit{Lepra reaction} \cite{ojo2022mycobacterium} and some on growth of the bacteria
and chemical consequences \cite{oliveira2005cytokines}. But very limited mathematical modeling  research is done  till date for this particular disease. Some studies dealing with the population level dynamics of the disease include \cite{blok2015mathematical, giraldo2018multibacillary}. The paper \cite{ghosh2021mathematical}  explores the cellular dynamics within the host. To our knowledge there is no work done yet dealing with the dynamics of the bio-markers involved in \textit{Lepra reactions.} Also there seems to be neither any clinical work  that clearly deals with the dynamics of  of bio-markers  during \textit{Lepra reactions.} Hence it is extremely important to study the dynamics at the bio-markers levels and their correlation  with the MDT drugs  that can help the clinicians for control of occurence of \textit{Lepra reactions.} \\

\hspace{0.2 in}Motivated by these observations in this work we propose to study the dynamics of the bio-markers  through chemical reactions.  In the next section we discuss and detail the proposed mathematical model.  We use the drugs in MDT as control variables. We  frame an optimal control problem along with a cost function  $\mathcal{J}_{min}$. In section $3$ we validate this model using the 2D -  heat plots. Further in the section $4$ we establish the existence of an optimal solution for the proposed optimal control problem. Next in the section $5$ we do the numerical studies. Initially we discuss the numerical scheme used namely the \textit{Newton's Gradient 
Method } for the optimal control studies.  Later we discuss  the inferences from the numerical simulations.  Finally in the last section  we do the  discussions and conclusions for this work.

\section{Mathematical model formulation}

 \hspace{0.2 in}Based on the clinical literature we conisder a model that consist of Susceptible schwann cells $S(t)$, Infected schwann cells $I(t), $  Bacterial Load $B(t)$ along with  five  cytokines  that play crucial role in \textit{Type-I Lepra reaction}. We have considered the cytokines $IFN-\gamma$, $TNF-\alpha$, $IL-10$, $IL-12$, $IL-15$ and $IL-17$ by capturing their concentration dynamics in \textit{Type-I Lepra reaction}. Below we discuss briefly each of the compartments  in the  model.\\

 \textbf{S(t) compartment:} The first term of the equation (\ref{sec11equ1})  deals with the natural birth rate of the susceptible schwann cell. The second term describes the decrease in number of susceptible cells  $S(t)$ at a rate $\beta$  due to infection by the bacteria (followed by the law of mass action). $\gamma$ represents death  of $S$ due to the cytokines response and $\mu_1$ represents the natural death rate of $S$. The rest of the terms account for the controls owing to the  due to MDT interventions.   \\

\textbf{I(t) compartment:} The increase of the  infected cells is accounted by the  term $\beta BS$ in equation (\ref{sec11equ2}). Decrease of these cells due to the cytokines response are at a rate $\delta$ and  the natural death rate is $\mu_1.$  The rest of the terms are associated with controls  owing to  MDT interventions.\\

\textbf{B(t) compartment:} The bacterial load increases  indirectly due to an  increase in $I(t)$ as the burst of more cells with bacteria increases their replication. This rate $\alpha$ is accounted in the first term of  (\ref{sec11equ3}).   $y$  denotes the rate of cleaning of $B(t)$ due to cytokines reposes and $\mu_2$ is the natural death rate of bacteria. The rest of the terms are associated with controls  owing to  MDT interventions.\\

\textbf{$I_{\gamma}$(t) compartment:} This compartment deals with the concentration level of Interferon-gamma ($IFN-\gamma$) through equation (\ref{sec11equ4}). The  first term represents the production of $IFN-\gamma$ in the presence of the infection. The second term deals with  the inhibition of the concentration level due other cytokines \cite{luo2021host, parida1993role} and the last term accounts for the  natural decay of the concentration.\\

\textbf{$T_{\alpha}$(t) compartment:} The  concentration of Tumour Necrosis Factor $\alpha$ ($TNF-\alpha$) is  increased by the inter action between $IFN-\gamma$ and the infected cells \cite{luo2021host, parida1993role}. This is being captured in the first term of the equation (\ref{sec11equ5}). The second term represents the natural decay. \\

Similar formulations are used for studying the concentration levels of  \textit{Interleukin-$12$}\textbf{($IL-12$)}, \textit{Interleukin-$15$}\textbf{($IL-15$)} and \textit{Interleukin-$17$}\textbf{($IL-17$)} compartments by equations (\ref{sec11equ7}),(\ref{sec11equ8}) and (\ref{sec11equ9}) respectively. \\

\textbf{$I_{10}$(t) compartment:} Equation (\ref{sec11equ6}) deals with  the  concentration levels of  \textit{Interleukin-$10$} \textbf{($IL-10$)}. The first term accounts for the production and the last term  for  the decay. The  middle term considers  the inhibition of $IL-10$ due to $IFN-\gamma$ \cite{luo2021host}. \\

{\bf{\large{Description of the Control Variables dealing with the MDT Drugs}}} \\

\hspace{0.2 in} WHO guidelines 2018 recommends a MDT for Leprosy,  that consist of three drugs  \textit{Rifampin, Dapsone} and \textit{Clofazimine} \cite{maymone2020leprosy, tripathi2013essentials}. The impact of each of these drugs and their mathematical articulation as control variables are as following.\\

\textbf{Rifampin ($D_{11},D_{12},D_{13}$):} \textit{Rifampin} is known for  rapid bacillary killing. Due to this there is an indirect decrease in the  amount of cells getting infected \cite{bullock1983rifampin}. Hence we incorporate  this with the control variable $D_{12}(t)$ in the compartment of infected cells of (\ref{sec11equ1}) -  (\ref{sec11equ9}) with a negative sign and in the bacterial load compartment this control is introduced as   $D_{13}^{2}(t)$.  Here the square on $D_{13}(t)$ is used to capture the extent of intense action on bacterial load.  This drug also reduces the susceptible cells, hence the control $D_{11}(t)$ is introduced with a negative sign. \\

\textbf{Dapsone ($D_{21},D_{22},D_{23}$):}  The drug {dapsone} is bactericidal and bacteriostatic against \textit{M. leprae}  \cite{paniker2001dapsone}. In a similar way to capture the drug action, we incorporate $D_{21}(t)$ and $D_{22}(t)$ in compartment $S$(equation (\ref{sec11equ1})) and compartment $I$(equation (\ref{sec11equ2})) respectively and $D_{23}^2(t)$ in the $B$(equation (\ref{sec11equ3})) compartment. \\

\textbf{Clofazimine ($D_{31},D_{33}$):} Clofazimine, the third drug in MDT for Leprosy acts as an immuno-suppressive and also causes  the static level of bacteria (bacteriostatic) against \textit{M. leprae } by binding with DNA of the bacteria and hence causing the inhibition of template function of DNA \cite{garrelts1991clofazimine}. Therefore to incorporate  this action, we include the control variable $D_{31}(t)$ in the $S(t)$ compartment in equation (\ref{sec11equ1}) resulting increase of these cells.  $D_{33}(t)$ is negatively incorporated  to account for the inhibition of bacterial replication.\\

\hspace{0.2 in} We next propose the   mathematical model dealing with the mechanism of drug action on each compartment of susceptible cells, infected cells and bacterial load along with concentration level of the cytokines.  \\

 \begin{eqnarray}
   	\frac{dS}{dt} &=&  \omega \ - \beta SB  - \gamma S - \mu_{1} S -D_{11}(t)S - D_{21}(t)S+D_{31}(t)S\label{sec11equ1} \\
   	\frac{dI}{dt} &=& \beta SB \ -\delta I - \mu_{1} I-D_{12}(t)I - D_{22}(t)I  \label{sec11equ2}\\ 
   	\frac{dB}{dt} &=&  \big(\alpha-D_{23}^2(t)-D_{33}(t)\big) I  \ - y B    -  \mu_{2} B-D_{13}^2(t)B       \label{sec11equ3}\\
        \frac{d I_{\gamma}}{dt} &=& \alpha_{I_\gamma} I -\big[ \delta^{I_{\gamma}}_ {T_\alpha}T_{\alpha}+\delta^{I_{\gamma}}_ {I_{12}}I_{12}+\delta^{I_{\gamma}}_ {I_{15}}I_{15}+\delta^{I_{\gamma}}_ {I_{17}}I_{17} \big]I-\mu_{I_\gamma} (I_{\gamma}- Q_{I_\gamma}) \label{sec11equ4}\\
        \frac{d T_{\alpha}}{dt} &=& \beta_{T_{\alpha}} I_{\gamma}I- \mu_{T_{\alpha}}(T_{\alpha}- Q_{T_\alpha}) \label{sec11equ5}\\
        \frac{d I_{10}}{dt} &=& \alpha_{I_{10}}I-\delta^{I_{10}}_{I_{\gamma}}I_{\gamma}-\mu_{I_{10}}(I_{10}- Q_ {I_{10}}) \label{sec11equ6}\\
        \frac{d I_{12}}{dt} &=& \beta_ {I_{12}} I_{\gamma}I-\mu_{I_{12}}(I_{12}- Q_ {I_{12}}) \label{sec11equ7}\\
        \frac{d I_{15}}{dt} &=& \beta_ {I_{15}} I_{\gamma}I-\mu_{I_{15}}(I_{15}- Q_ {I_{15}}) \label{sec11equ8}\\
        \frac{d I_{17}}{dt} &=& \beta_ {I_{17}} I_{\gamma}I-\mu_{I_{17}}(I_{17}- Q_ {I_{17}}) \label{sec11equ9}
\end{eqnarray}
\begin{table}[ht!]   
 \centering 
\begin{tabular}{||l| l|| } 
\hline\hline

\textbf{Symbols} &  \textbf{Biological Meaning}  \\  
\hline\hline 
$S$ & Susceptible schwann cells.\\
\hline

$I$ & Infected schwann cells.  \\
\hline
$B$ & Bacterial load.  \\
\hline
$I_{\gamma}$ & Concentration of IFN-$\gamma$.  \\
\hline
$T_{\alpha}$ & Concentration of TNF-$\alpha$.  \\
\hline
$I_{12}$ & Concentration of IL-12.  \\
\hline
$I_{15}$ & Concentration of IL-15.  \\
\hline
$I_{17}$ & Concentration of IL-17.  \\

\hline
$\omega$& Natural birth rate of the  susceptible cells. \\

\hline
$\beta$ & Rate at which schwann cells are infected. \\

\hline
$\gamma$& Death rate of the  susceptible cells due to cytokines.  \\

\hline
$\mu_{1}$ & Natural death rate of  schwann cells and  infected 
 schwann cells. \\

\hline
$\delta$ & Death rate of infected schwann cells  due to cytokines. \\

\hline
$\alpha$ & Burst rate of  infected schwann cells realising the bacteria. \\

\hline

$y$ & Rates at which {\textit{ M. Leprae }}
is removed by cytokines. \\

\hline
$\mu_{2}$ & Natural death rate of {\textit{ M. Leprae }}.  \\
\hline 
$\alpha_{I_\gamma}$ & Production rate of IFN-$\gamma$. \\

\hline
$\delta^{I_{\gamma}}_{T_{\alpha}}$ & Inhibition of IFN-$\gamma$ due to TNF-$\alpha$. \\

\hline

$\delta^{I_{\gamma}}_{I_{12}}$ & Inhibition of IFN-$\gamma$ due to IL-12.\\

\hline

$\delta^{I_{\gamma}}_{I_{15}}$ & Inhibition of IFN-$\gamma$ due to IL-15. \\

\hline

$\delta^{I_{\gamma}}_{I_{17}}$ & Inhibition of IFN-$\gamma$ due to IL-17. \\

\hline

$\mu_{I_{\gamma}}$ & Decay rate  of IFN-$\gamma$. \\

\hline

$\beta_{T_{\alpha}}$ & Production rate of TNF-$\alpha$. \\

\hline

$\mu_{T_{\alpha}}$ & Decay rate  of TNF-$\alpha$. \\

\hline

$\alpha_{I_{10}}$ & Production rate of $IL-10$. \\
\hline

$\delta^{I_{10}}_{I_{\gamma}}$ & Inhibition $IL-10$ of  due to  IFN-$\gamma$. \\
\hline

$\mu_{I_{10}}$ & Decay rate  of $IL-10$. \\

\hline

$\beta_{I_{12}}$ & Production rate of $IL-12$. \\

\hline

$\mu_{I_{12}}$ & Decay rate  of $IL-12$. \\

\hline

$\beta_{I_{15}}$ & Production rate of $IL-15$. \\

\hline

$\mu_{I_{15}}$ & Decay rate  of $IL-15$. \\

\hline

$\beta_{I_{17}}$ & Production rate of $IL-17$. \\

\hline

$\mu_{I_{17}}$ & Decay rate  of $IL-17$. \\

\hline
$Q_{I_{\gamma}}$ & Quantity of $IFN-\gamma$ before infection. \\

\hline

$Q_{T_{\alpha}}$ & Quantity of $TNF-\alpha$ before infection. \\
\hline

$Q_{I_{10}}$ & Quantity of $I_{10}$ before infection. \\
\hline

$Q_{I_{12}}$ & Quantity of $I_{12}$ before infection. \\

\hline

$Q_{I_{15}}$ & Quantity of $I_{15}$ before infection. \\

\hline

$Q_{I_{17}}$ & Quantity of $I_{17}$ before infection. \\

\hline\hline

\end{tabular}
\end{table} \vspace{.2cm}

     \hspace{0.5 in} Now mathematically  we define the set of all  control variables as follows:
     
     $$U=\Big\{D_{ij}(t),D_{ij}(t)\in[0,D_{ij}max],  1\leq i,j \leq 3,\ ij \neq 32,  t\in[0,T]\Big\}$$
     
    \hspace{0.2 in} Here $D_{ij}max$ represents the upper limit  of the corresponding  control variable which depends on the availability and limit of the drugs recommended for patients   and $T$ is the final time of observation. \\
     
     \hspace{0.2 in}  Since the drugs in MDT can lead to some hazards,  we consider a cost functional that minimizes the drug concentrations along with the infected cell count and bacterial load. \\
     
     Based on this we define  the following cost function:
     
\begin{equation}
  \begin{aligned}
   \mathcal{J}_{min}\big(I,B,D_{1},D_{2},D_{3}\big) &= \int_{0}^{T} \Big(I(t) + B(t)+P\big[D^{2}_{11}(t)+D^{2}_{12}(t)+D^{3}_{13}(t)\big]\\
   & +Q\big[D^{2}_{21}(t)+D^{2}_{22}(t)+D^{3}_{23}(t)\big] +  R\big[D_{31}^{2}(t)+D_{33}^{2}(t)\big]  \Big) dt 
  \label{opti}
  \end{aligned}
\end{equation} \\

Here the integrand of the cost function $\mathcal{J}_{min}$ is denoted by \\

\begin{equation}
  \begin{aligned}
    L\big(I,B, D_1,D_2,D_3\big) &= \Big(I(t) + B(t)+P[D^{2}_{11}(t)+D^{2}_{12}(t)+D^{3}_{13}(t)]\\
   & +Q[D^{2}_{21}(t)+D^{2}_{22}(t)+D^{3}_{23}(t)]
   + R\big[D_{31}^{2}(t)+D_{33}^{2}(t)\big] \Big) 
  \end{aligned}
\end{equation}
which denotes the running cost and is commonly known as \textit{Lagrangian} of the optimal control problem. \\

Now the admissible set of solution of the above optimal control problem  (\ref{sec11equ1}) - (\ref{opti}) is given by 

$$\Omega =\Big\{\big(I,B,D_1,D_2,D_3\big): I, B \hspace{0.1in} satisfying  \hspace{0.1in} (\ref{sec11equ1})-(\ref{sec11equ9})  \ \forall \ (D_1,D_2,D_3) \in U \Big\}$$\\
where $D_{1}=\big(D_{11},D_{12},D_{13} \big)$, $D_{2}=\big(D_{21},D_{22},D_{23} \big)$, $D_{3}=\big(D_{31},D_{33} \big)$. \\\\

 We next validate the model using 2D - Heat plots with  control variables as zero.

\section{Model validation through  2D Heat Plots} 
\hspace{0.2 cm} We validate the above framed  model based on the  clinical characteristics of leprosy. From the clinical studies it can be seen that  the doubling rate of of the Bacteria (\textit{M.Leprae}) is approximately $14$ days \cite{levy2006mouse}.  We use this clinical characteristic to validate our model.\\
 
\hspace{0.2 cm} To generate these heat plots, we use two pairs of parameters and their rage of variation. We consider the pairs of parameters $\alpha-\gamma$  and $\alpha-y.$  The range of values for $\alpha$ was from $ 0.0563 \ to \ 0.0763, $ the range for $\gamma$ is from $ 0.15 \ to \ 0.2090 $ and the range for   $y$ is from $ 0.0002 \ to \ 0.5003. $  All the other parameter values are taken from the table \ref{Table:1} and a feasible initial condition  $S(0)=5200$, $I(0)=0$ , $B(0)=40$, $I_{\gamma}(0)=5 $, $T_{\alpha}(0)=5$, $I_{10}(0)=15 $, $I_{12}(0)=12 $, $I_{15}(0)=12 $ and $I_{17}(0)=10$ was chosen. Using the above values we simulate the model through MATLAB taking $50$ pairs of different values for each  of the parameters within the rage provided and we consider the value of the control variables as zero at that time. Then we record the value of $B(t)$ at $14^{th}$ day  in a $50 \cross 50 $ matrix. We used the function \texttt{imagesc()} to create these heat plots in MATLAB.\\

\begin{figure} 
    \begin{subfigure}{0.45\textwidth}
        \includegraphics[width=\textwidth]{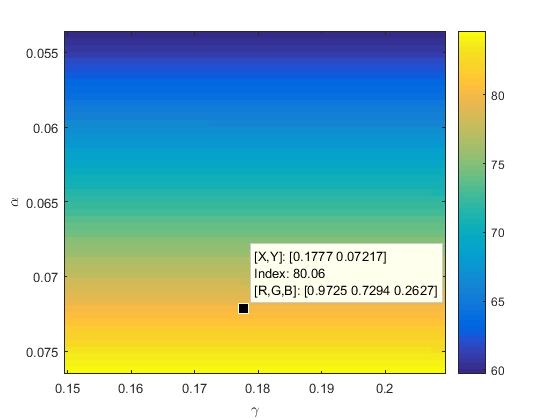}
        \caption{}
        \label{2a}
    \end{subfigure}
    \begin{subfigure}{0.45\textwidth}
        \includegraphics[width=\textwidth]{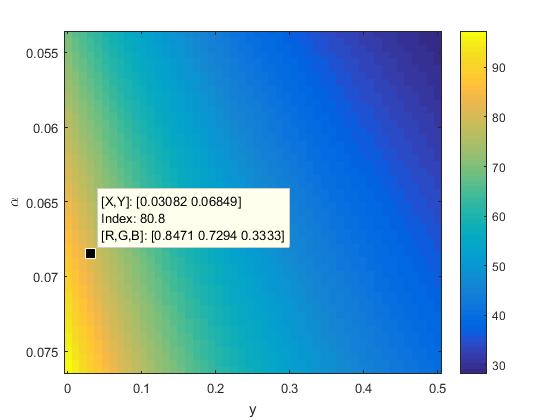}
        \caption{}
        \label{2b}
    \end{subfigure}
    
    \caption{Heat Plots (a)Taking $50 \cross 50$ pairs of values of the parameters $\alpha$ and $\gamma$, \\ 
            (b)Taking $50 \cross 50$ pairs of values of the parameters $\alpha$ and $y$  }
    \label{fig:2}
\end{figure}

In the figure \ref{2a}, the value of $\gamma$ is taken on ordinate and $\alpha$  on abscissa.  From the color bar of this figure, we see that the model is able to reproduce the characteristic that the initial  count of bacteria doubles   in $14$ days (in this case it $80$ with $B(0)=40$). Same inference can be made for figure \ref{2b}  where $y$ is taken on ordinate and $\alpha$ is taken on abscissa.

\section{Existence of an Optimal Solution}

 \hspace{0.2 in} In this section we establish the existence of solution for the optimal control problem (\ref{sec11equ1}) -  (\ref{opti}) using the existence theorem $2.2$ of \cite{boyarsky1976existence}.
 \begin{theorem}
 There exists an 8- tuple of optimal controls ($D_{1}^*(t),D_{2}^*(t),D_{3}^*(t)$) in the set of admissible controls $U$ and hence the optimal state variables,($S^*(t),I^*(t),B^*(t)$) such that the cost function is minimized i.e.
$$\mathcal{J}_{min}\big(I^*(t),B^*(t),D_{1}^*(t),D_{2}^*(t),D_{3}^*(t )\big)=\min_{D_1,D_2,D_3\in U} \mathcal{J}_{min}\big(I,B,D_{1},D_{2},D_{3}\big)$$
corresponding to the control system(\ref{sec11equ1}) -  (\ref{opti}), where $D_{1}=\big(D_{11},D_{12},D_{13} \big)$, $D_{2}=\big(D_{21},D_{22},D_{23} \big)$, $D_{3}=\big(D_{31},D_{33} \big)$ . 
\end{theorem}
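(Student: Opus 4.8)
The plan is to verify that the optimal control problem (\ref{sec11equ1})--(\ref{opti}) satisfies the hypotheses of the existence theorem $2.2$ of \cite{boyarsky1976existence}, which in this setting reduce to five conditions: (i) the set of admissible control--state pairs $\Omega$ is nonempty; (ii) the admissible control set $U$ is closed and convex; (iii) the right-hand side of the state system is continuous and bounded above by a linear function of the state and control variables; (iv) the integrand $L$ of the cost functional is convex in the controls; and (v) $L$ is coercive, i.e.\ bounded below by $c_1\big(\sum_{i,j}D_{ij}^2\big)^{\beta/2}-c_2$ for constants $c_1>0$, $c_2$, and $\beta>1$. I would establish these in turn and then invoke the cited theorem to obtain the optimal $8$-tuple.

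For (i) I would fix an arbitrary admissible control $(D_1,D_2,D_3)\in U$ and show the state system admits a unique solution on $[0,T]$. The right-hand sides of (\ref{sec11equ1})--(\ref{sec11equ9}) are polynomial (at most bilinear) in the states and controls, hence locally Lipschitz, so the Picard--Lindel\"of theorem yields a unique local solution; global existence on $[0,T]$ then follows once a priori bounds are in hand, and $\Omega\neq\emptyset$ is witnessed by $D_{ij}\equiv 0$. For (ii), since each control is constrained to a closed interval $[0,D_{ij}\max]$, the set $U$ is a finite product of compact intervals, hence a closed, bounded, convex box. Condition (iii) follows from the bilinear structure: writing $X$ for the state vector, each component of the vector field is dominated by a constant plus a linear combination of $\abs{S},\abs{I},\abs{B}$ and the bounded controls, so a bound $\abs{f(t,X,D)}\le K(1+\abs{X}+\abs{D})$ holds on the invariant region. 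For (iv), the control-dependent part of $L$ is the sum of $P[D_{11}^2+D_{12}^2+D_{13}^3]$, $Q[D_{21}^2+D_{22}^2+D_{23}^3]$, and $R[D_{31}^2+D_{33}^2]$; the quadratic terms are convex, and although $D_{13}^3$ and $D_{23}^3$ are cubic, each is convex on $[0,\infty)$ (its second derivative $6D$ is nonnegative there), and the control range lies in $[0,\infty)$, so $L$ is convex in the controls. Condition (v) is immediate from these same quadratic and cubic lower bounds.

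The main obstacle is the a priori boundedness underpinning (i) and (iii): I must show the nine states remain nonnegative and bounded on $[0,T]$. Positivity would follow by checking the sign of each vector field on the boundary of the positive orthant --- e.g.\ $\dot S=\omega>0$ when $S=0$, and $\dot I=\beta SB\ge 0$ when $I=0$ --- with care needed for the $B$ equation, whose net production rate $\alpha-D_{23}^2-D_{33}$ may change sign, and for the cytokine equations, where the baseline terms $-\mu(\,\cdot\,-Q)$ drive each concentration toward its pre-infection level $Q$. For the upper bounds I would add the $S$ and $I$ equations to control the total cell population, noting that the only destabilizing contribution is the bounded term $D_{31}S$; a Gronwall estimate then yields an exponential-in-$T$ bound, which suffices on the finite horizon. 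The load $B$ is subsequently bounded linearly through $I$, and each cytokine is bounded using the already-bounded $I$ and $I_\gamma$ together with its restoring term. Once these bounds are secured all five hypotheses hold, and the existence theorem of \cite{boyarsky1976existence} delivers optimal controls $(D_1^*,D_2^*,D_3^*)\in U$ together with the corresponding optimal states minimizing $\mathcal{J}_{min}$.
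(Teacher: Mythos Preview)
Your conditions (i)--(v) are the hypotheses of a Fleming--Rishel / Filippov--Cesari existence result, not those of Theorem~2.2 in \cite{boyarsky1976existence}. The paper instead verifies Boyarsky's own structural conditions: F1--F3 on the dynamics (supplemented by F4 from his Corollary~2.1 for the cytokine equations), which require for each $f^i$ a bounded $g_i(D)$ with convex range and an $F_i(t,x)\in L^1$ such that $f^i(t,x,D^{(1)})-f^i(t,x,D^{(2)})\le F_i(t,x)\bigl[g_i(D^{(1)})-g_i(D^{(2)})\bigr]$; together with conditions C1--C5 on the running cost (continuity, measurability, an $L^1$ lower bound, convexity in $D$, and monotonicity). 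So the two arguments check genuinely different sets of hypotheses, and the a~priori positivity/boundedness discussion that occupies most of your sketch plays no explicit role in the paper's proof.

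Your version also has a real gap. In the Fleming--Rishel framework, convexity of $L$ in $D$ and linear growth of $f$ are not by themselves sufficient: one needs in addition either that $f$ be affine in the control or that the Cesari orientor field $N(t,x)=\{(z^0,z):z^0\ge L(t,x,D),\ z=f(t,x,D),\ D\in U\}$ be convex for each $(t,x)$. Here $f^3$ contains $-D_{23}^2 I$ and $-D_{13}^2 B$, so $f$ is \emph{not} affine in the controls, and any reparametrisation $u=D_{13}^2$ that linearises $f^3$ turns the cost term $D_{13}^3=u^{3/2}$ into a concave function of $u$. Convexity of $N(t,x)$ can in fact be salvaged in this model --- essentially because $t\mapsto t^{3/2}$ is convex, so the cubic cost terms dominate the quadratic control dependence in the dynamics via Jensen's inequality --- but this is a nontrivial verification that your proposal never raises. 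Without it, your five conditions do not guarantee lower semicontinuity of $\mathcal{J}_{min}$ along minimising sequences, and the existence argument is incomplete.
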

\begin{proof}
We consider $\frac{ds}{dt}=f^{1}(t,x,D)$, $\frac{dI}{dt}=f^{2}(t,x,D)$, $\frac{dB}{dt}=f^{3}(t,x,D)$, $\frac{dI_{\gamma}}{dt}=f^{4}(t,x,D)$, $\frac{dT_{\alpha}}{dt}=f^{5}(t,x,D)$, $\frac{dI_{10}}{dt}=f^{6}(t,x,D)$, $\frac{dI_{12}}{dt}=f^{7}(t,x,D)$, $\frac{dI_{15}}{dt}=f^{8}(t,x,D)$ and $\frac{dI_{17}}{dt}=f^{9}(t,x,D)$ of the control system (\ref{sec11equ1}) -  (\ref{opti}). Here $x\in X$ denotes the state variables and $D\in U$ denotes the control variables. With $f=(f^1,f^2,f^3,f^4,f^5,f^6,f^7,f^8,f^9)$ we see that  $X \subseteq \mathbb{R}^{9}$ and,
$$f:[0,T]\times X \times U \to \mathbb{R}^{9}$$
is a continuous function of $t$ and $x$ for each  $ D_{ij} \in U$. \\

Now we intend to show that $(F1)$ to $(F3)$ of theorem 2.2 of \cite{boyarsky1976existence} holds true for all $f^i$'s. \\

 \textbf{F1:} Here each of the $f^i$'s has a continuous and bounded partial derivative implying that  $f$ is  Lipschitz's continuous.\\
 
 \textbf{F2:} We  consider $g_1(D_{11},D_{21},D_{31}) =-D_{11}-D_{12}+D_{31}, $  which is bounded on $U$.
 
 Thus 
 \begin{equation*}
  \begin{aligned}
  \frac{ f^1(t,x,D^{(1)})- f^1(t,x,D^{(2)})}{\big[g_1(D^{(1)})-g_1(D^{(2)})\big]}
 &= \frac{\big[D^{(2)}_{11}+D^{(2)}_{12}-D^{(2)}_{31}-D^{(1)}_{11}-D^{(1)}_{12}+D^{(1)}_{31}\big]S}{\big[D^{(2)}_{11}+D^{(2)}_{12}-D^{(2)}_{31}-D^{(1)}_{11}-D^{(1)}_{12}+D^{(1)}_{31}\big]}\\
   & \leq \eta S = F_1(t,x)\\
 \therefore  f^1(t,x,D^{(1)})- f^1(t,x,D^{(2)}) &\leq F_1(t,x)\bullet \big[g_1(D^{(1)})-g_1(D^{(2)})\big]
  \end{aligned}
\end{equation*}
 Here $\eta > 1$ is a real number. Moreover since $U$ compact and $g_1$ is continuous,  we have $g_{1}(U)$ to be compact. Also since the function  $g_{1}(U)$ is linear so the range of $g_{1}$ i.e. $g_{1}(U)$ will be convex.  Since $U$ is non-negative so $g^{-1}_{1}$ is non-negative. \\

 Similarly for $f^{2}(t,x,D),$ we choose $g_2(D_{12},D_{22}) = -D_{12}-D_{22}$ and $F_2(t,x)= I$ and   \textbf{F2} can be proven in a similar way.\\

 Now for $f^{3}(t,x,D)$ we choose $g_3(D_{23},D_{33}) = -D^{2}_{23}-D_{33} $ 
 
Hence
 
\begin{equation*}
  \begin{aligned}
  \frac{ f^3(t,x,D^{(1)})- f^3(t,x,D^{(2)})}{\big[g_3(D^{(1)})-g_3(D^{(2)})\big]}
   &= \frac{\big[D^{2(2)}_{23}+D^{(2)}_{33}-D^{2(1)}_{23}-D^{(1)}_{33}\big]I-[D_{13}^{2(1)}-D_{13}^{2(2)}]B}{\big[D^{2(2)}_{23}+D^{(2)}_{33}-D^{2(1)}_{23}-D^{(1)}_{33}\big]}\\
   & \leq \frac{\big[D^{2(2)}_{23}+D^{(2)}_{33}-D^{2(1)}_{23}-D^{(1)}_{33}\big]I}{\big[D^{2(2)}_{23}+D^{(2)}_{33}-D^{2(1)}_{23}-D^{(1)}_{33}\big]}=I=F_3(t,x)  \ \bigg(\text{provided} \ D^{2(1)}_{13} \geq D^{2(2)}_{13} \bigg)\\
 \therefore  f^3(t,x,D^{(1)})- f^3(t,x,D^{(2)}) &\leq F_3(t,x)\cdot \big[g_3(D^{(1)})-g_3(D^{(2)})\big]
  \end{aligned}
\end{equation*}
 But to satisfy this hypothesis for remaining $f^i$'s we take the help of corollary 2.1 of \cite{boyarsky1976existence} i.e. \textbf{F4} must be satisfied in the place of \textbf{F2}. Hence considering $g_{4}(D_1,D_2,D_3)=0$, which is bounded measurable function and $F_4(t,x)=1$ we establish the relation
\begin{equation*}
       f^3(t,x,D^{(1)})- f^3(t,x,D^{(2)})=0 \leq 1= 1\cdot 0= F_4(t,x) \cdot \big[g_4(D^{(1)}-D^{(2)})\big]
\end{equation*}
 Similarly taking $F_i(t,x)=i$ and $g_{i}(D_1,D_2,D_3)=0$ for $i=5,6,7,8,9$ we get  the relations 
\begin{equation*}
    f^i(t,x,D^{(1)})- f^i(t,x,D^{(2)}) \leq F_i(t,x) \cdot \big[g_i(D^{(1)}-D^{(2)})\big]
\end{equation*}
 Hence we are done in satisfying the hypothesis \textbf{F2}\\

 \textbf{F3:}  Since $S,I,B$ and $f_{i}(x,t)=1$ are bounded on $[0,T]$  so $F_{i}(\bullet,x^{u}) \in \mathcal{L}_{1}$\\

 Now we have to show that the running cost function 
 $$C(t,x,D)=I(t) + B(t)+P\big[D^{2}_{11}(t)+D^{2}_{12}(t)+D^{3}_{13}(t)\big]
   +Q\big[D^{2}_{21}(t)+D^{2}_{22}(t)+D^{3}_{23}(t)\big] + R\big[|D_{3}(t)|^{2}\big]$$
satisfies the  conditions $(C1)-(C5)$ of  \textit{Theorem 2.2} of \cite{boyarsky1976existence}.

Here $C:[0,T]\times X \times U \to \mathbb{R}$\\

\textbf{C1:} We see that $C(t,\bullet,\bullet)$ is a continuous function  as it is sum of continuous functions which are functions of $t\in [0,T]$.\\

\textbf{C2:}  $S, I$ and $B$  and all $D_{ij}$'s are bounded implying that $C(\bullet,x,D)$ is bounded and hence measurable for each $x\in X$ and $D_{ij} \in U$.\\

\textbf{C3:} Consider $\Psi(t) = \kappa$ such that $ \kappa = \min \{I(0),B(0)\} $ then $\Psi$ will bounded such that for all $t\in [0,T]$, $x \in X$ and $D_{ij}\in U,$ we have 
$$C(t,x,D)\geq \Psi(t)$$\\

\textbf{C4:} Since $C(t,x,D)$ is sum of the function  which are convex in $U$ for each fixed $(t,x)\in [0,T]\times X $  therefore $C(t,x,D)$ follows the same.\\

\textbf{C5:} Using similar type of argument, we can easily show that for each fixed $(t,x)\in [0,T]\times X $, $C(t,x,D)$ is a monotonically increasing function. \\

Hence we have shown that the optimal control problem satisfies the all hypothesis of the \textit{Theorem 2.2} of \cite{boyarsky1976existence}.
Therefore there exists a $8$- tuple of optimal controls  $\big(D_{1}^*(t),D_{2}^*(t),D_{3}^*(t)\big)$ in the set of admissible controls $U$ such that the cost function is minimized.

\end{proof}

\section{Numerical Studies for the Optimal Control Problem}
\subsection{Theory}
Here we discuss the technique to evaluate the aforementioned
 optimal control problem. To evaluate the optimal control variables and the optimal state variables, we use the \textit{Forward Backward Sweep Method} \cite{mcasey2012convergence}  and  Pontryagin Maximum Principle \cite{liberzon2011calculus}. \\

 The Hamiltonian of the control system (\ref{sec11equ1}) - (\ref{opti}) is given by
 
\begin{equation}
\begin{aligned}
    \mathcal{H}\big(I,V, D_1,D_2,D_3,\lambda\big) = & I(t) + B(t)+P\big[D^{2}_{11}(t)+D^{2}_{12}(t)+D^{3}_{13}(t)\big]
 +Q\big[D^{2}_{21}(t)+D^{2}_{22}(t)+D^{3}_{23}(t)\big] +\\ &  R\big[D_{31}^{2}(t)+D_{33}^{2}(t)\big]+ \lambda_{1}\frac{dS}{dt} + \lambda_{2}\frac{dI}{dt} + \lambda_{3}\frac{dB}{dt} + \lambda_{4}\frac{dI_{\gamma}}{dt} + \lambda_{5}\frac{dT_{\alpha}}{dt}+\lambda_{6}\frac{dI_{10}}{dt}\\
 & +\lambda_{7}\frac{dI_{12}}{dt} +\lambda_{8}\frac{dI_{15}}{dt}
  +\lambda_{9}\frac{dI_{17   }}{dt}
\end{aligned}
\end{equation} 
where $\lambda = (\lambda_1,\lambda_2,\lambda_3,\lambda_4,\lambda_5,\lambda_6,\lambda_7,\lambda_8,\lambda_9)$ is the as co-state variable/adjoint vector. \\

Now using \textit{Pontryagin Maximum Principle} with $D^*=(D_1^*,D_2^*,D_3^*)$ and $X^*=(x_1,x_2,\dots,x_9)$ being the optimal control and state variable respectively,  there exits a optimal co-state variable for which
\begin{equation}
    \frac{dx_i}{dt}=\frac{\partial \mathcal{H}(X^*,D^*,\lambda^*)}{\partial \lambda_i }
\label{steq}    
\end{equation}
\begin{equation}
    \frac{d \lambda_i}{dt}=-\frac{\partial \mathcal{H}(X^*,D^*,\lambda^*)}{\partial x_i}
    \label{adeq}
\end{equation}
Clearly the  (\ref{steq}) will be equivalent to the control system (\ref{sec11equ1})- (\ref{opti}) and system of ODE's for co-state variables should satisfy the following.
\begin{equation}
    \begin{aligned}
         \frac{d\lambda_1}{dt} &= \big(\beta B +\mu_1+\gamma+D_{11}+D_{21}-D_{31}\big)\lambda_{1}-\big(\beta B\big) \lambda_2\\
         \frac{d\lambda_2}{dt} &= \big(\mu_1+\delta+D_{12}+D_{22}\big)\lambda_2-\big(\alpha-D_{23}^2-D_{33}\big)\lambda_{3}-\alpha_{I_{\gamma}}\lambda_4-\beta_{T_{\alpha}}I_{\gamma}\lambda_{5}\\
         & -\alpha_{I_{10}}\lambda_6-\beta _{I_{12}}I_{\gamma}\lambda_7-\beta_{I_{15}}I_{\gamma}\lambda_8-\beta_{I_{17}}I_{\gamma}\lambda_9-1 \\
         \frac{d\lambda_3}{dt} &= \big(\beta S)\lambda_{1}-\big(\beta S)\lambda_{2}+\big(y+\mu_{2}+D_{13}^{2}\big)\lambda_{3}-1\\
         \frac{d\lambda_4}{dt} &= \mu_{I_{\gamma}}I\lambda_{4}-\beta_{T_{\alpha}}I\lambda_5+\delta^{I_{10}}_{I_{\gamma}}\lambda_6-\beta_{I_{12}}I\lambda_7- \beta_{I_{15}}I\lambda_8-\beta_{I_{17}}I\lambda_9\\
         \frac{d\lambda_5}{dt} &= \delta^{I_{\gamma}}_{T_{\alpha}}I\lambda_4+\mu_{T_{\alpha}}\lambda_5\\
         \frac{d\lambda_6}{dt} &= \mu_{I_{10}}\lambda_6\\
         \frac{d\lambda_7}{dt} &= \delta^{I_{\gamma}}_{I_{12}}I\lambda_4+\mu_{I_{12}}\lambda_7\\
         \frac{d\lambda_8}{dt} &= \delta^{I_{\gamma}}_{I_{15}}I\lambda_4+\mu_{I_{15}}\lambda_8\\
         \frac{d\lambda_9}{dt} &= \delta^{I_{\gamma}}_{I_{17}}I\lambda_4+\mu_{I_{17}}\lambda_9
    \end{aligned}
\end{equation}
and the  transversality condition $\lambda_{i}(T)=\eval{\frac{\partial \phi}{\partial t}}_{t=T}=0$ for all $i=1,2,\dots,9$ (where $\phi$ is the final cost function and here $\phi \equiv 0$). \\

Now to obtain the optimal value of the  controls  we use the \textit{Newton's Gradient method} for optimal control problem \cite{edge1976function}. For this a recursive formula is being used to update the control in each step of numerical simulation i.e.
\begin{equation}
   D_{ij}^{k+1}(t)=D_{ij}^{k}(t)+\theta_k d_k
   \label{ctrl1}
\end{equation}

where $D_{ij}^{k}(t)$ is the value of the control at $k^{th}$ iteration  at time instance $t$, $d_k$ is the direction  and $\theta$ is the step size. Usually direction in \textit{Newton's Gradient method} is evaluated by the negative of the gradient of the objective function i.e. $d_k=-g_{ij}(D_{ij}^{k})$ and here we take $g_{ij}(D_{ij}^{k})=\eval{\frac{\partial\mathcal{H}}{\partial D_{ij}}}_{D_{ij}^{k}(t)}$ as described in \cite{edge1976function}.  The step size $\theta$ is evaluated at each iteration by linear search technique that minimizes  the Hamiltonian, $\mathcal{H}$. Therefor the previous formula (\ref{ctrl1}) will be as:

\begin{equation}
   D_{ij}^{k+1}(t)=D_{ij}^{k}(t)- \theta_k \eval{\frac{\partial\mathcal{H}}{\partial D_{ij}}}_{D_{ij}^{k}(t)}
   \label{ctrl2}
\end{equation}
Now to execute the idea above we have to calculate the gradient for each control i.e. $g_{ij}(D_{ij})$ which are as follows 
\begin{equation*}
    \begin{aligned}
        g_{11}(D_{11})&=2PD_{11}(t)-\lambda_{1}S(t)\\
        g_{12}(D_{12})&=2PD_{12}(t)-\lambda_{2}I(t)\\
        g_{13}(D_{13})&=3PD_{13}^2(t)-2\lambda_{3}D_{13}(t)B(t)\\
        g_{21}(D_{21})&=2QD_{21}(t)-\lambda_{1}S(t)\\
        g_{22}(D_{22})&=2QD_{22}(t)-\lambda_{2}I(t)\\
        g_{23}(D_{23})&=3QD_{23}^2(t)-2\lambda_{3}D_{23}(t)I(t)\\
        g_{31}(D_{31})&=2RD_{31}(t)+\lambda_{1}S(t)\\
        g_{33}(D_{33})&=2RD_{33}(t)-\lambda_{3}I(t)
    \end{aligned}
\end{equation*}

\subsection{Numerical Simulations}
\hspace{0.2 in} In this section we perform the numerical simulations to study the correlation of  cytokines levels in Type-1 Lepra reaction and the drugs involved in MDT in a qualitative manner.  \\
 
\hspace{0.2 in} The value of the parameters used are collected from various clinical papers and appropriate references are cited in the table \ref{Table:1}. For some of the parameters like $\mu,\gamma$ and $\delta$ the doubling time was available, hence they were estimated  using the formula
$$ rate\hspace{0.07 in}\%= \frac{log(2)}{doubling \hspace{0.07 in} time} \times 100\approx \frac{70}{doubling \hspace{0.07 in} time}$$

\hspace{0.2 in} We then divide these rate percentages by 100 to get values of these parameters. Some of the cases we have taken average of the result yield from different medium such as $7-AAD$, $TUNNEL$ as described in  \cite{oliveira2005cytokines}. Some parameters are finely tuned in order to satisfy certain hypothesis/assumptions for convenience of numerical simulation.  \\

For these simulations we consider the time duration of $100$ days i.e. 
($T = 100$) and the parameter values are chosen as  $\omega = 20.9$, $\beta = 0.3$, $\mu_1 = 0.00018$,
$\gamma = 0.01795$, $\delta = 0.2681$, $\alpha = 0.2$, $y = 0.3$, $\mu_2 = 0.57$, $\alpha_{I_\gamma}=0.0003$, $\delta^{I_{\gamma}}_{T_{\alpha}}=0.00554$, $\delta^{I_{\gamma}}_{I_{12}}=0.00903$, $\delta^{I_{\gamma}}_{I_{15}}=0.00625$, $\delta^{I_{\gamma}}_{I_{17}}=0.00499$, $\mu_{I_{\gamma}}=2.16$, $\beta_{T_{\alpha}}=0.004$, $\mu_{T_{\alpha}}=1.112$, $\alpha_{I_{10}}=0.044$, $\delta^{I_{10}}_{I_{\gamma}}=0.00146$, $\mu_{I_{10}}=16$, $\beta_{I_{12}}=0.011$, $\mu_{I_{12}}=1.88$, $\beta_{I_{15}}=0.025$, $\mu_{I_{15}}=2.16$, $\beta_{I_{17}}=0.029$, $\mu_{I_{17}}=2.34$, $Q_{I_{\gamma}}=0.1$, $Q_{T_{\alpha}}=0.14$ $Q_{I_{10}}=0.15$, $Q_{I_{12}}=1.11$, $Q_{I_{15}}=0.2$, $Q_{I_{10}}=0.317.$  \\

First
we have solved the system numerically without any drug intervention. All the numerical 
calculation were done in MATLAB  and we used $4^{th}$ order {Runge-Kutta} method to solve system of ODEs and to find the value of the $\theta$ in each iteration we used the \texttt{fminsearch()} function of MATLAB. Here we consider the initial value of the state variables as $S(0)=520$, $I(0)=275$ , $B(0)=250$, $I_{\gamma}(0)=50 $, $T_{\alpha}(0)=50$,$I_{10}(0)=75 $, $I_{12}(0)=125 $, $I_{15}(0)=125 $ and $I_{17}(0)=100$ as in \cite{ghosh2021mathematical,liao2013role}.  \\

 \begin{table}[ht!]   
 \centering 
\begin{tabular}{||c|c|c||} 
\hline\hline

\textbf{Symbols} &  \textbf{Values} & \textbf{Units} \\  

\hline \hline
$\omega$ & 0.0220\cite{kim2017schwann}& $pg.ml^{-1}.day^{-1} $ \\

\hline
$\beta$ & 3.4400\cite{jin2017formation} & $pg.ml^{-1}.day^{-1} $  \\

\hline
$\gamma$& 0.1795 \cite{oliveira2005cytokines}  & $day^{-1}$ \\

\hline
$\mu_{1}$ & 0.0018 \cite{oliveira2005cytokines} & $day^{-1}$  \\

\hline
$\delta$ & 0.2681 \cite{oliveira2005cytokines} & $day^{-1}$ \\

\hline
$\alpha$ & 0.0630\cite{levy2006mouse} & $pg.ml^{-1}.day^{-1}$ \\

\hline
$y$ & $0.0003$\cite{ghosh2021mathematical} & $day^{-1}$ \\
\hline
$\mu_{2}$ & 0.5700\cite{international2020international}& $day^{-1}$ \\
\hline
$\alpha_{I_\gamma}$ & 0.0003\cite{pagalay2014mathematical}& $pg.ml^{-1}.day^{-1} $ \\
\hline
$\delta^{I_{\gamma}}_{T_{\alpha}}$ &0.005540* & $pg.ml^{-1}$  \\

\hline

$\delta^{I_{\gamma}}_{I_{12}}$ & 0.009030* & $pg.ml^{-1}$ \\

\hline

$\delta^{I_{\gamma}}_{I_{15}}$ & 0.006250* & $pg.ml^{-1}$ \\

\hline

$\delta^{I_{\gamma}}_{I_{17}}$ & 0.004990*& $pg.ml^{-1}$ \\

\hline

$\mu_{I_{\gamma}}$ & 2.1600\cite{pagalay2014mathematical} & $day^{-1} $\\
\hline

$\beta_{T_{\alpha}}$ & 0.0040\cite{pagalay2014mathematical}&  $pg.ml^{-1}.day^{-1} $ \\
\hline
$\mu_{T_{\alpha}}$ & 1.1120\cite{pagalay2014mathematical}  & $day^{-1}$  \\
\hline
$\alpha_{I_{10}}$ & 0.0440\cite{liao2013role} & $pg.ml^{-1}.day^{-1} $  \\
\hline

$\delta^{I_{10}}_{I_{\gamma}}$ & 0.001460*& $pg.ml^{-1}$\\
\hline

$\mu_{I_{10}}$ & 16.000\cite{liao2013role} & $day^{-1}$ \\
\hline

$\beta_{I_{12}}$ &0.0110\cite{liao2013role}  & $pg.ml^{-1}.day^{-1}$\\
\hline
$\mu_{I_{12}}$ & 1.8800\cite{pagalay2014mathematical} & $day^{-1}$ \\
\hline
$\beta_{I_{15}}$ &0.0250\cite{su2009mathematical} & $pg.ml^{-1}.day^{-1} $   \\
\hline
$\mu_{I_{15}}$ & 2.1600\cite{su2009mathematical} & $day^{-1}$\\
\hline
$\beta_{I_{17}}$ & 0.0290\cite{su2009mathematical} & $pg.ml^{-1}.day^{-1} $  \\
\hline
$\mu_{I_{17}}$ & 2.3400\cite{su2009mathematical}& $day^{-1}$\\
\hline
$Q_{I_{\gamma}}$ & 0.1000\cite{talaei2021mathematical} & Relative concentration\\
\hline
$Q_{T_{\alpha}}$ & 0.1400\cite{brady2016personalized}  & Relative concentration \\
\hline 
$Q_{I_{10}}$ & 0.1500\cite{talaei2021mathematical} &  Relative concentration\\
\hline 
$Q_{I_{12}}$ &1.1100\cite{brady2016personalized}  & Relative concentration \\
\hline 
$Q_{I_{15}}$ & 0.2000\cite{brady2016personalized} & Relative concentration  \\
\hline 
$Q_{I_{17}}$ & 0.3170\cite{brady2016personalized} & Relative concentration \\
\hline \hline
\end{tabular}
\caption{Values of the parameters complied from clinical literature.The (*) marked values of the parameters are assumed.}
\label{Table:1}
\end{table}

\hspace{0.2 in} Further to simulate the system with controls, we use the
{Forward-backward sweep} method starting with the initial value of the controls as
zero and estimate the sate variables forward in time. Since the the transversality
conditions have the value of adjoint vector at end time $T,$ so the adjoint vector was calculated backward in time. \\

 Using the value of state variables and adjoint vector we
 calculate the  control variables at each time instance that get
updated in each iteration.The strategy to update controls is followed  by implementing the \textit{Newton's gradient method}  as expressed in equation (\ref{ctrl2}). We continue this till the convergence criterion is met as in
\cite{edge1976function}. \\

The weights  $P, Q$ and
$R$  in the cost function $\mathcal{J}_{min}$ are chosen based on their  \textit{hazard ratio} of the corresponding drugs. We chose the weights directly proportional to the hazard ratios.  In {Table} \ref{HR} the {hazard ratios} of the
different drugs are enlisted. We have chosen the weights $(P, Q$
and $S)$ proportional to  the hazard ratios i.e. $P = 1$, $Q = 1.99$ and $R = 7.1$.

\begin{table}[ht!]   
\centering 
\begin{tabular}{|c|c|c|} 
\hline

\textbf{Drugs} &  \textbf{Hazard Ratio } & \textbf{Source} \\ 

\hline\hline
Rifampin & 0.26 & \cite{bakker2005prevention} \\

\hline\hline
Dapsone & 0.99 & \cite{cerqueira2021influence}  \\

\hline\hline
Clofazimine & 1.85 & \cite{cerqueira2021influence}\\

\hline

\end{tabular}
\caption{Hazard Ratio of the drugs }
\label{HR}
\end{table}

We now numerically simulate the $S, I $ and $B$ populations  and  the cytokine levels with single control intervention, with two control interventions and finally with three control interventions of MDT. In each of these plots we also depict the no control intervention case for comparison purpose. In the next part we will discuss the findings of these simulations.

\subsection{Findings}

 \hspace{0.2 in} In each of the figures discussed in this section the first three frames (1-3) depicts the  dynamics of the respective compartments in the model (\ref{sec11equ1}) -  (\ref{sec11equ9})  with individual  drug administration, a combination of two drugs administration  and all the three drugs in MDT administration, respectively as control variables/interventions. The further frames below are the magnified versions of either Frame-1 and Frame-2 or Frame-1, Frame-2 and Frame-3 and are depicted for better clarity purpose to the reader.

\begin{figure}[ht!]
    \centering
    \includegraphics[height = 6cm, width =12cm]{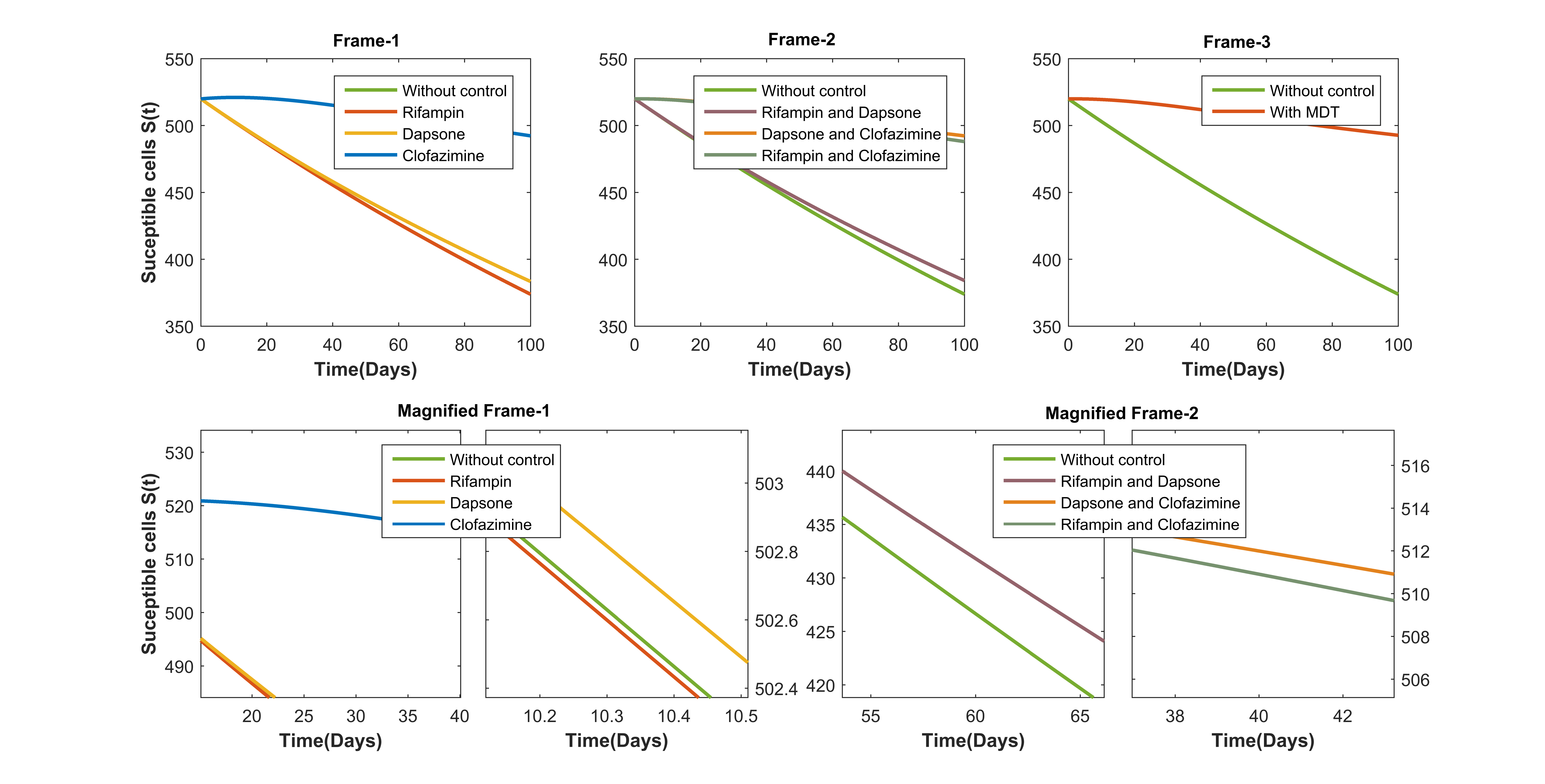}
    \caption{Plot depicting the dynamics of the susceptible cells $ S(t) $ on administration of MDT drugs individually (Frame-1), in combination of two (Frame-2) and all the three MDT drugs together (Frame-3). Magnified versions of Frame-1 and Frame-2 are also depicted for better clarity purpose to the reader. }
    \label{SC}
\end{figure} 

 Figure \ref{SC} depicts the  dynamics of the susceptible cells $ S(t) $   with individual  drug administration, a combination of two drugs administration  and all the three drugs in MDT administration, respectively.  From Frame-1 and its  magnification it can be seen that the drug \textit{clofazimine} and \textit{dapsone} has positive impact on susceptible cell count i.e these drugs increases the count of susceptible cell and among them \textit{clofazimine} acts most effectively.  On the other hand \textit{rifampin} decreases the count of the susceptible cells. In case  of combination of two drugs, \textit{rifampin} and \textit{dapsone} decreases the number of susceptible cells. But other two combinations increases the count and among them combination of \textit{clofazimine} and \textit{dapsone} have more impact in increment of the cell count.  Finally the Frame-3 depicts that the combination of three drugs increases the susceptible cell count. \\
\begin{figure}[ht!]
    \centering
    \includegraphics[height = 6cm, width =12cm]{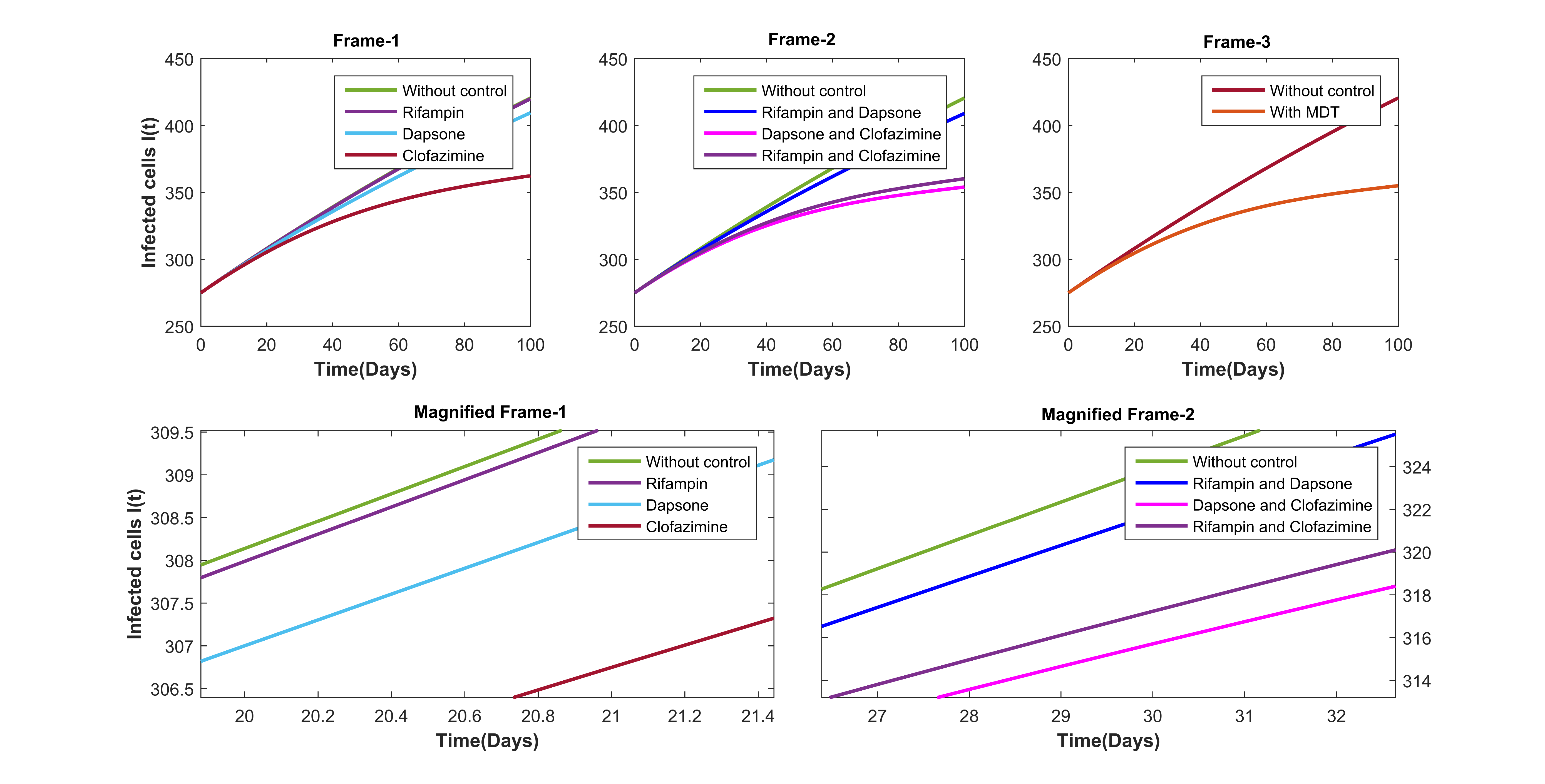}
    \caption{Plot depicting the dynamics of the infected cells $ I(t) $ on administration of MDT drugs individually (Frame-1), in combination of two (Frame-2) and all the three MDT drugs together (Frame-3). Magnified versions of Frame-1 and Frame-2 are also depicted for better clarity purpose to the reader.}
    \label{IC}
\end{figure} 

 Figure \ref{IC} depicts the dynamics of the infected cells $ I(t) $ on administration of MDT drugs. Analyzing the magnified Frame-1 we see that each of the MDT drugs is  helpful in reducing the infected cells.  In the context of   effectiveness we see that  the  \textit{clofazimine} takes the top position followed by \textit{dapsone} and followed by \textit{rifampin}. From the magnified  Frame-2 we see that among the combination of two drugs the combination consisting of \textit{clofazimine} and \textit{dapsone} acts most effectively where as the combination of \textit{rifampin} and \textit{dapsone} has the least impact.  All the three drugs  of  MDT when administered in combination  reduces the infected cell count  the best.\\

\begin{figure}[ht!]
    \centering
    \includegraphics[height = 6cm, width =12cm]{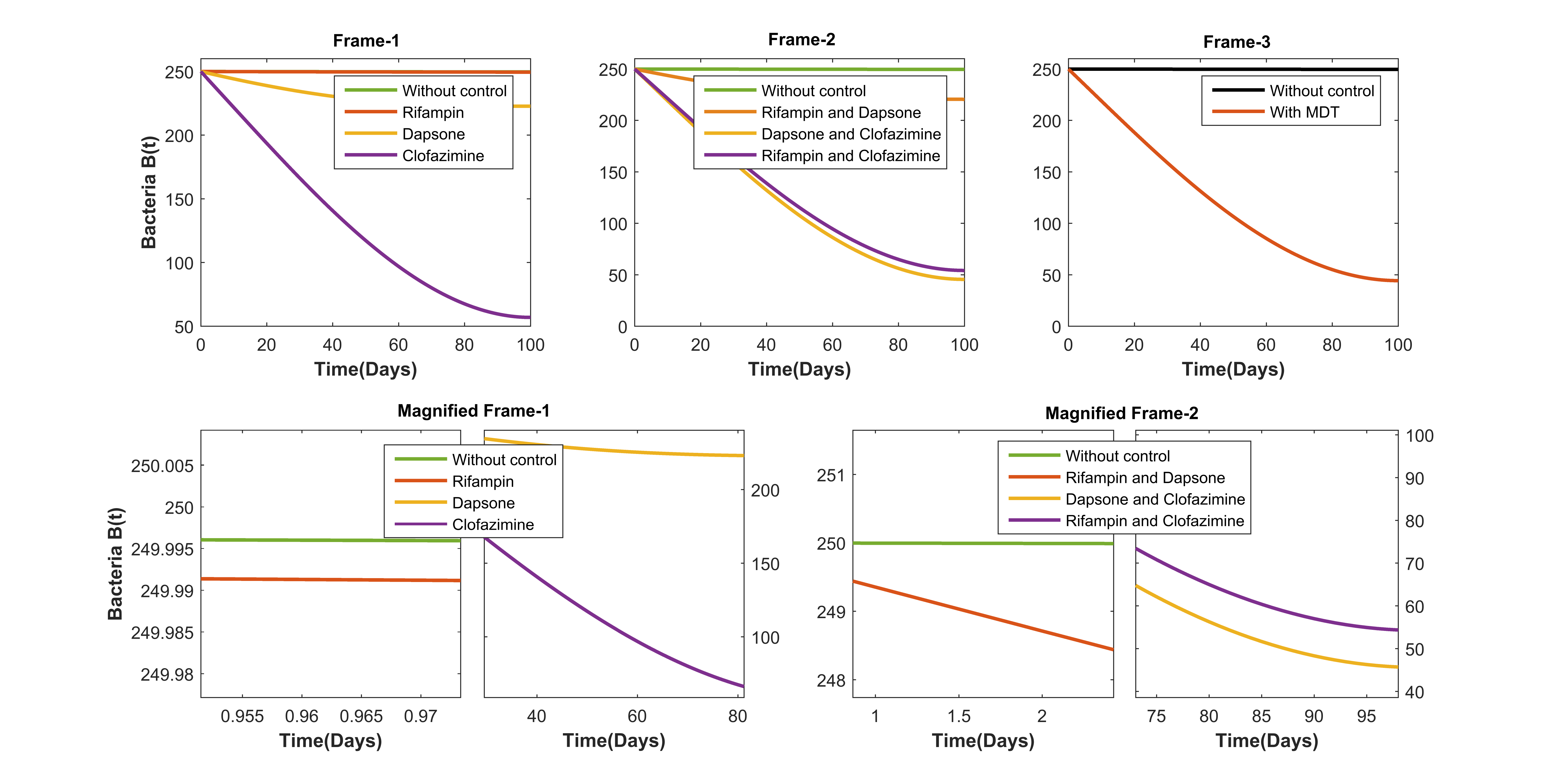}
    \caption{Plot depicting the dynamics of the bacterial load $ B(t) $ on administration of MDT drugs individually (Frame-1), in combination of two (Frame-2) and all the three MDT drugs together (Frame-3). Magnified versions of Frame-1 and Frame-2 are also depicted for better clarity purpose to the reader.}
    \label{BL}
\end{figure} 
From the maginifications in  figure \ref{BL} we see that \textit{clofazimine} is the most effective and \textit{rifampin} is the least effective drug in reducing the bacterial load when drugs are administered individually. In case administration of combination of two drugs,   \textit{dapsone} and \textit{clofazimine} combination has the most impact and \textit{rifampin} and \textit{dapsone} has the least impact. All the three drugs  of  MDT when administered in combination  reduces the bacterial load  the best.\\\
\begin{figure}[ht!]
    \centering
    \includegraphics[height = 6cm, width =12cm]{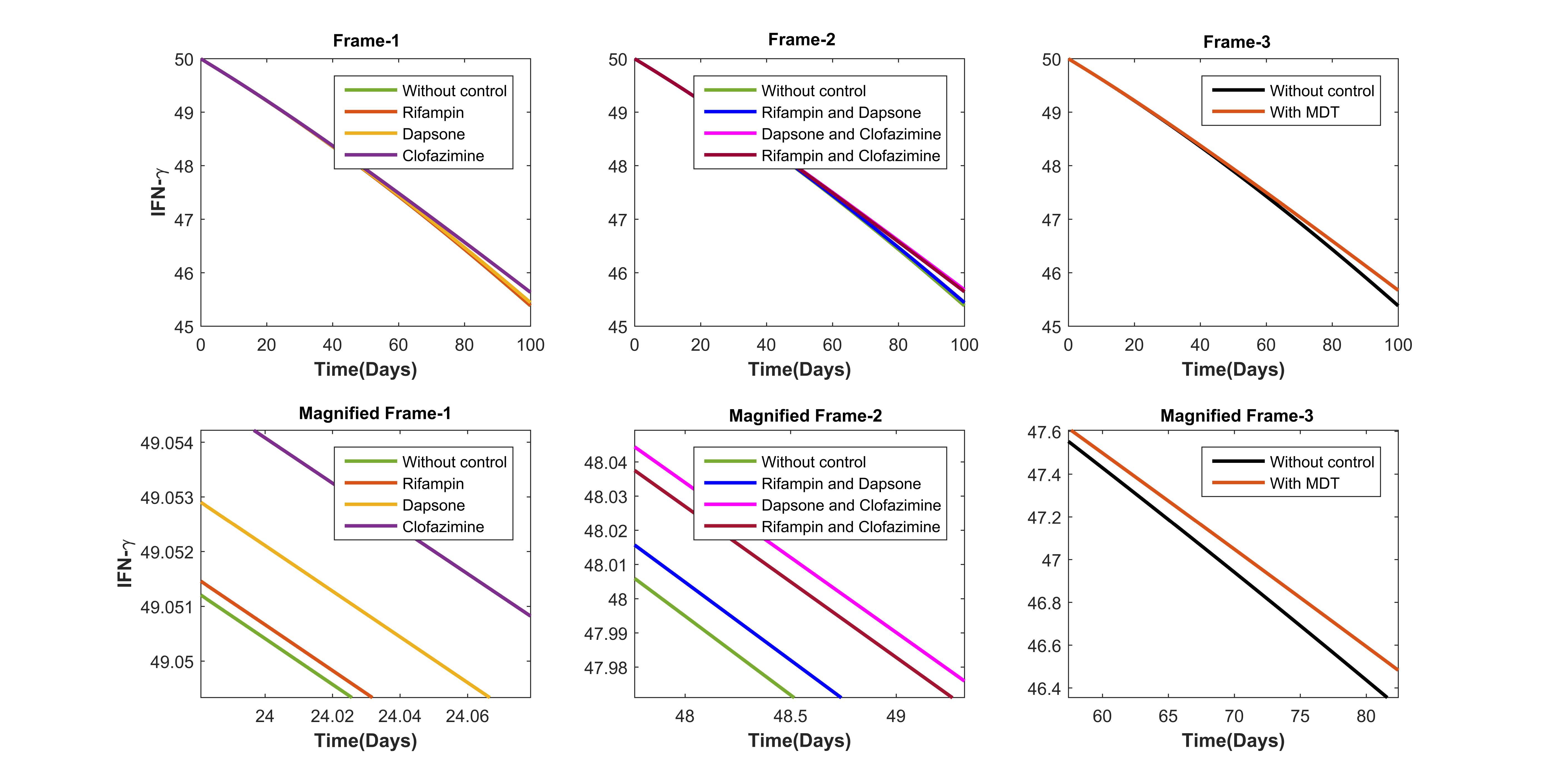}
    \caption{Plot depicting the dynamics of the $IFN-\gamma$ on administration of MDT drugs individually (Frame-1), in combination of two (Frame-2) and all the three MDT drugs together (Frame-3). Magnified versions of Frame-1, Frame-2 and Frame-3 are also depicted for better clarity purpose to the reader.}
    \label{IG}
\end{figure}

The figure \ref{IG} provides us the most important information that with out any  intervention of drugs, the level of $IFN-\gamma$ goes on decreasing during \textit{Lepra reaction} and upon administration of drugs in MDT  the levels of the $IFN-\gamma$ get enhanced. In the case of administration of  the drugs individually  we see that \textit{clofazimine} enhances the levels of $IFN-\gamma$ the highest followed by \textit{dapsone} and further followed by \textit{rifampin}. In case of combination of two drugs administration we see that \textit{dapsone} and \textit{clofazimine} enhances the levels of $IFN-\gamma$ the highest followed by \textit{rifampin} and \textit{clofazimine} and further followed by \textit{rifampin} and \textit{dapsone}. \\

\begin{figure}[ht!]
    \centering
    \includegraphics[height = 6cm, width =12cm]{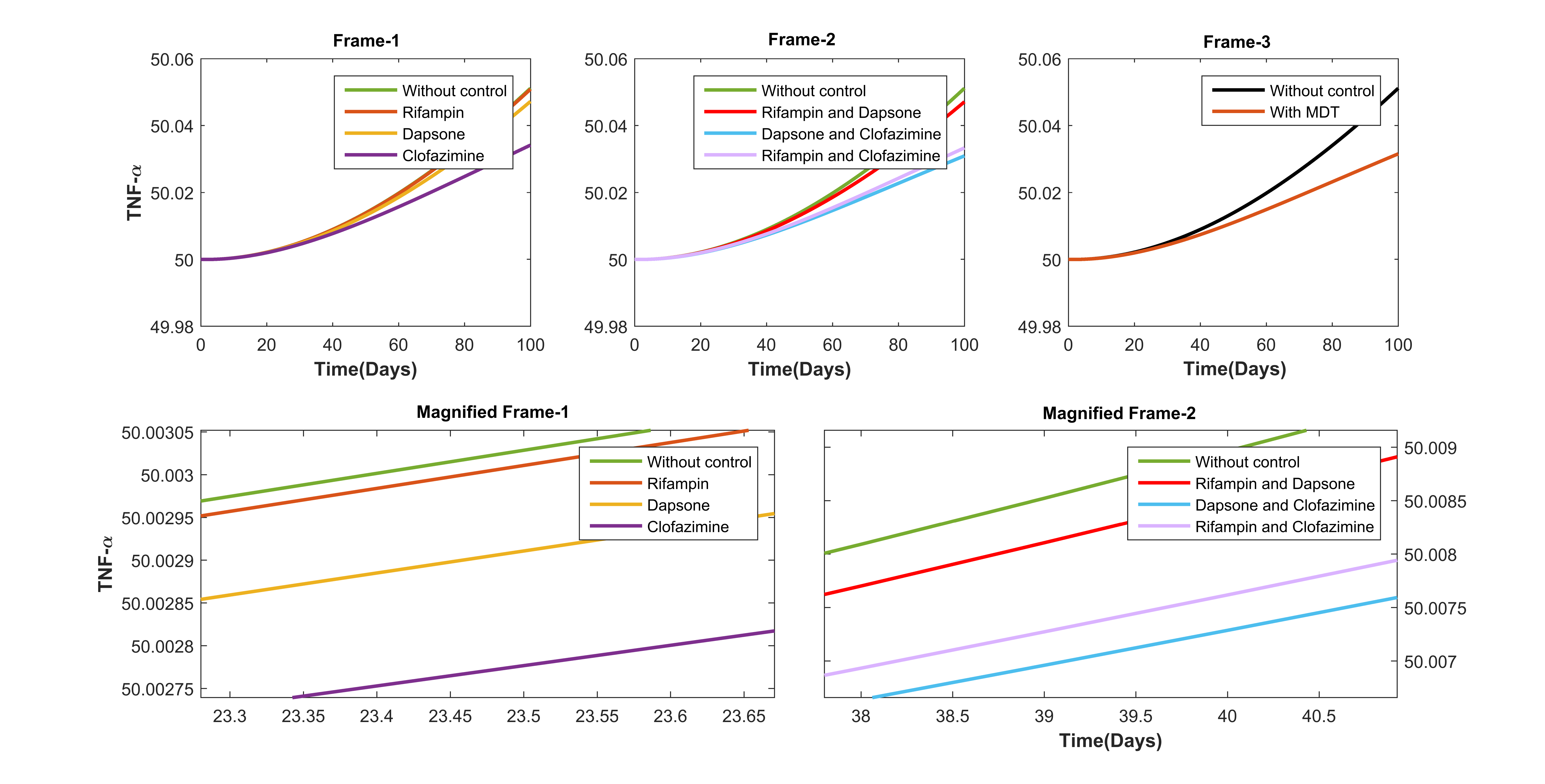}
    \caption{Plot depicting the dynamics of the  TNF-$\alpha$ on administration of MDT drugs individually (Frame-1), in combination of two (Frame-2) and all the three MDT drugs together (Frame-3). Magnified versions of Frame-1 and Frame-3 are also depicted for better clarity purpose to the reader.}
    \label{TA}
\end{figure} 
The figure \ref{TA} clearly depicts that the level of  TNF-$\alpha$  increases during \textit{Lepra reaction}. But the different combinations of drugs in MDT slow down the rate of increment of the level of TNF-$\alpha$. In case of individual administration of drugs, \textit{clofazimine} is the best for suppressing the increment of the levels of TNF-$\alpha$  followed by \textit{dapsone} and further followed by \textit{rifampin}. In case of combination of  two drugs  administration, \textit{dapsone} and   \textit{clofazimine} combination works  the best folowed by \textit{rifampin} and \textit{clofazimine} combination and further followed by \textit{rifampin} and \textit{dapsone} combination.  Similar behaviours can be observed for the cytokines $I L- 15, IL - 17 $ which are depicted in the figures \ref{IL15} and \ref{IL17} respectively. \\

\begin{figure}[ht!]
    \centering
    \includegraphics[height = 6cm, width =12cm]{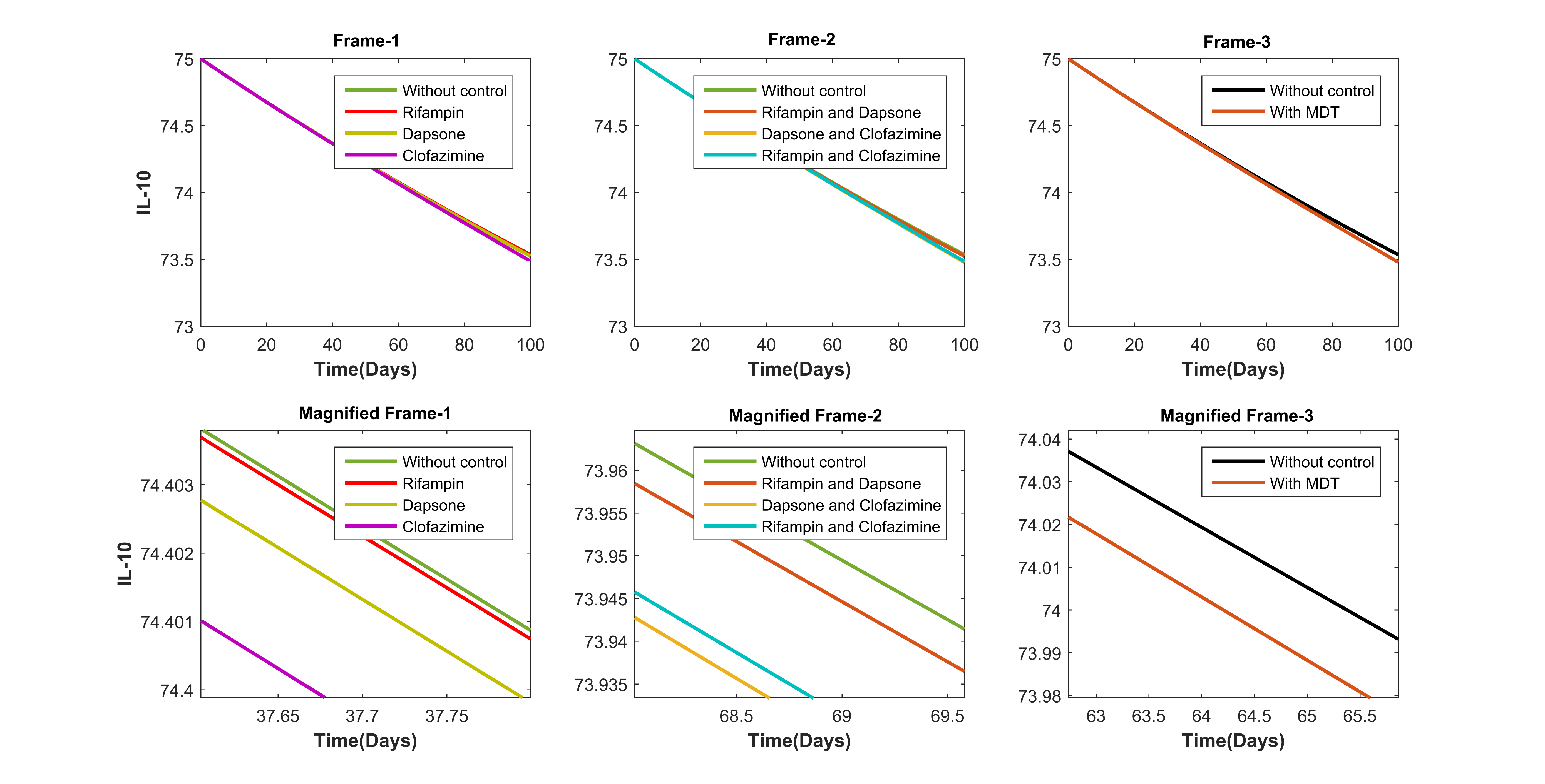}
    \caption{Plot depicting the dynamics of the $IL - 10$ on administration of MDT drugs individually (Frame-1), in combination of two (Frame-2) and all the three MDT drugs together (Frame-3). Magnified versions of Frame-1, Frame-2 and Frame-3 are also depicted for better clarity purpose to the reader.}
    \label{IL10}
\end{figure} 
\begin{figure}[ht!]
    \centering
    \includegraphics[height = 6cm, width =12cm]{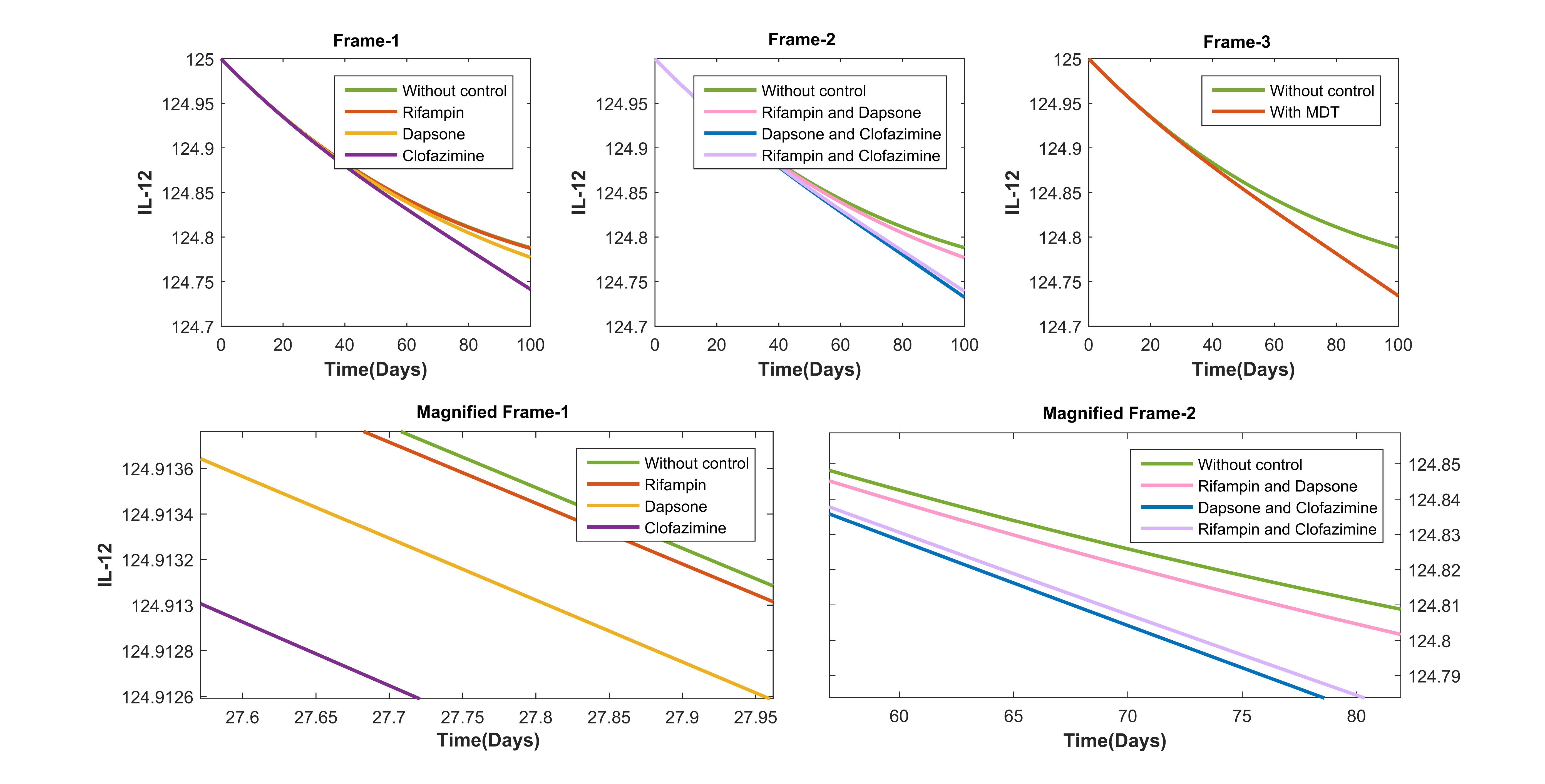}
    \caption{Plot depicting the dynamics of the $IL - 12$ on administration of MDT drugs individually (Frame-1), in combination of two (Frame-2) and all the three MDT drugs together (Frame-3). Magnified versions of Frame-1 and Frame-2 are also depicted for better clarity purpose to the reader.}
    \label{IL12}
\end{figure} 

From the   figures  \ref{IL10} and  \ref{IL12} it can be seen that during \textit{Lepra reaction} the levels  of both of the $IL-10$ and $IL-12$ cytokines decreases. But the different combinations of drug interventions of MDT can further enhance the rate of decrement. When drugs are applied individually, \textit{rifampin}  has the less impact in enhancing the decrement as compared to other drugs.  \textit{Clofazimine} has the most impact on enhancing the decrement. The degree of enhancement of the rate of decrement of the cytokines levels in case of two drugs combination follows the order, \textit{rifampin} and \textit{dapsone}  $<$ \textit{rifampin} and \textit{clofazimine} $<$ \textit{dapsone} and \textit{clofazimine}. Finally the the combination of three drugs does the same impact on cytokine level which can be see in the Frame-3 of the figures \ref{IL10} and \ref{IL12} respectively.

\begin{figure}[ht!]
    \centering
    \includegraphics[height = 6cm, width =12cm]{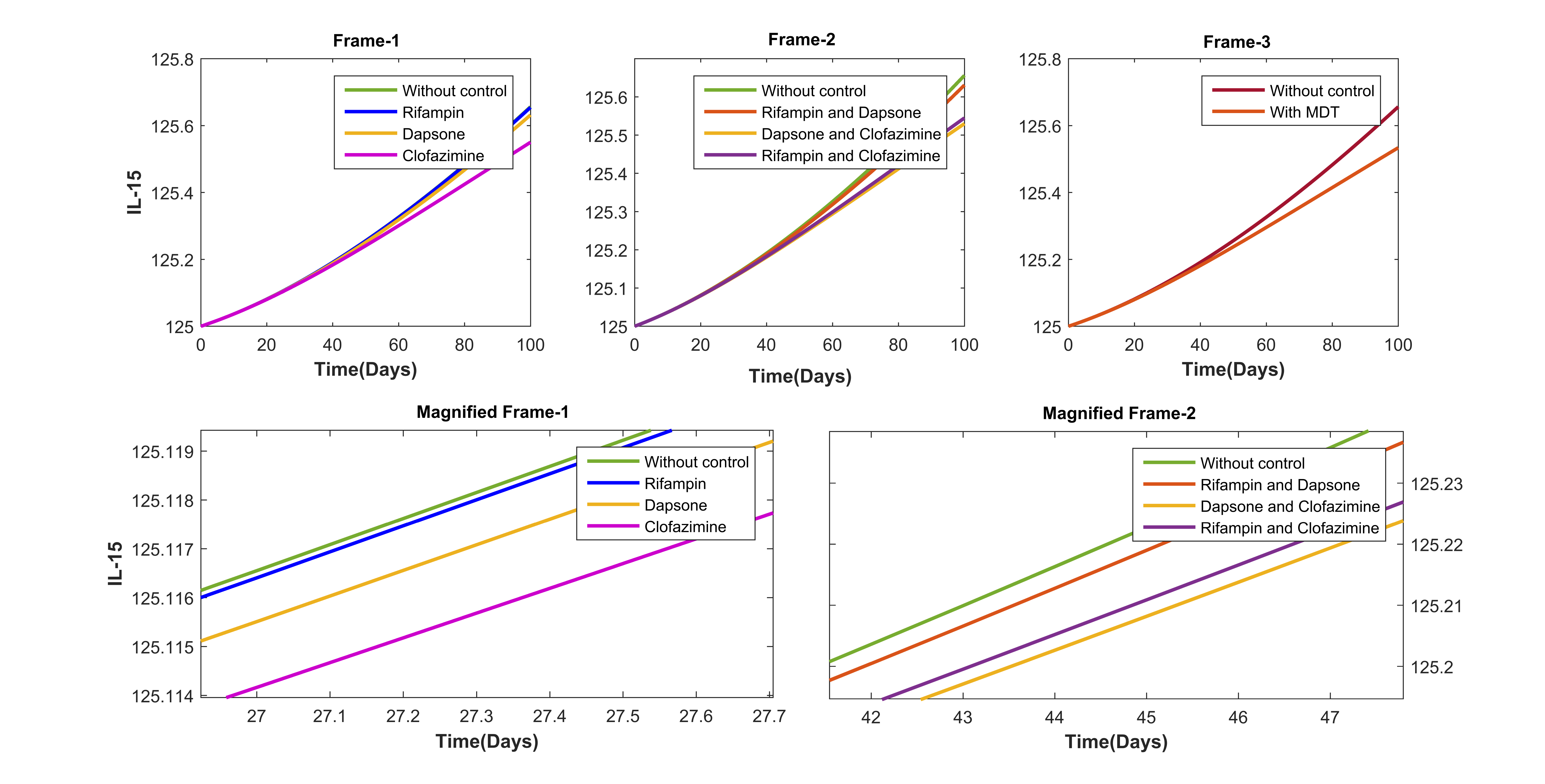}
    \caption{Plot depicting the dynamics of the $IL - 15$ on administration of MDT drugs individually (Frame-1), in combination of two (Frame-2) and all the three MDT drugs together (Frame-3). Magnified versions of Frame-1 and Frame-2 are also depicted for better clarity purpose to the reader.}
    \label{IL15}
\end{figure} 

\begin{figure}[ht!]
    \centering
    \includegraphics[height = 6cm, width =12cm]{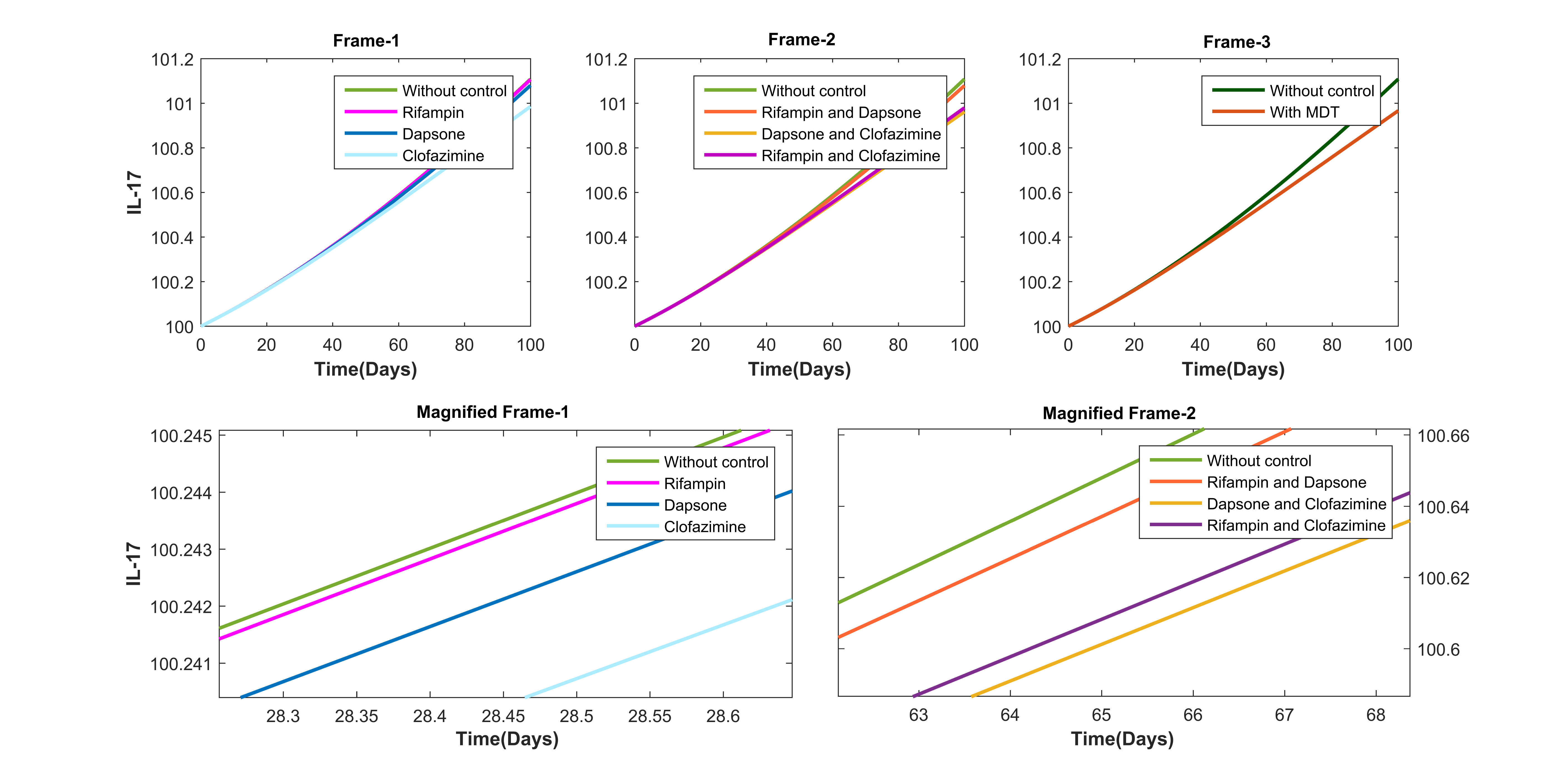}
    \caption{Plot depicting the dynamics of the $IL - 17$ on administration of MDT drugs individually (Frame-1), in combination of two (Frame-2) and all the three MDT drugs together (Frame-3). Magnified versions of Frame-1 and Frame-2 are also depicted for better clarity purpose to the reader.}
    \label{IL17}
\end{figure}

\section{Discussions and Conclusions}

The novel thing about  this paper is that it deals with a model that includes the dynamics of the levels of crucial cytokines that are involved in \textit{Lepra reaction} and also this work studies the impact of different drugs in MDT on the levels of these cytokines. The findings of the studies includes the following.

\begin{itemize}
    \item Among the drugs used in MDT for treating Leprosy, \textit{clofazimine} and \textit{dapsone} increas the susceptible cell count where as  \textit{rifampin} has an negative impact on it.
    
    \item The two drug combination of \textit{rifampin} and \textit{dapsone} has the negative impact on susceptible cells count.
    
    \item The MDT drug combinations decreases both the infected cell count as well as bacterial load.  \textit{Clofazimine} works the best in reduction when each of the drugs are administered individually and in combination of two drug administration, \textit{clofazimine} and \textit{dapsone} reduces the best.
    
    \item During the \textit{Lepra reaction} the levels of $IFN-\gamma$, $IL-10$ and $IL-12$ decreases whereas the levels  of $TNF-\alpha$, $IL-15$ and $IL-17$  increases.
    
    \item Each of the drugs used in MDT  enhances the $IFN-\gamma$ levels in host body. \textit{Clofazimine} enhances the best when each of the drugs are administered individually and in combination of two drug administration, \textit{clofazimine} and \textit{dapsone} enhances the best.
    
      \item The levels of both the cytokines  $IL-10$ and $IL-12$ decrease on administration  of the drugs in MDT. \textit{Rifampin} works the least in reduction when each of the drugs are administered individually and in combination of two drug administration, \textit{rifampin}- \textit{dapsone} combination impact the least.

    \item In the case of $TFN-\alpha$, $IL-15$ and $IL-17$ the drugs in MDT reduce the rate of increment of these bio markers.  \textit{Clofazimine}  is the best for suppressing the increment of these bio markers when each of the drugs are administered individually and in combination of two drug administration, \textit{clofazimine} and \textit{dapsone} turns out to be the best.

    \item In summary this is a novel and first of its kind work wherein we have discussed the natural history and dynamics of crucial bio markers in a \textit{Type-I Lepra reaction} and also studied in detail the influence of different combinations of drugs in MDT used for treating leprosy on the levels of these bio makers. This study can  be of important help to the clinician in early detection of the leprosy and avoid and control the disease from going to \textit{Lepra reactions} and help in averting major damages.
\end{itemize}

\printbibliography

\end{document}